\DeclareTextFontCommand{\emph}{\color{Blue}\em}
\newtheorem{theorem}{Theorem}[section]
\newtheorem{conjecture}[theorem]{Conjecture}
\newtheorem{lemma}[theorem]{Lemma}
\newtheorem{corollary}[theorem]{Corollary}
\theoremstyle{definition}
\newtheorem{definition}[theorem]{Definition}
\theoremstyle{remark}
\newtheorem*{remark}{Remark}
\renewcommand{\S}{\mathfrak{S}}
\newcommand{\C}{\mathbb{C}}
\newcommand{\x}{\mathbf{x}}
\newcommand{\y}{\mathbf{y}}
\renewcommand{\t}{\mathbf{t}}
\newcommand{\Z}{\mathbb{Z}}
\newcommand{\G}{\mathfrak{G}}
\title{Graham positivity of triple Schubert calculus}
\author{Yibo Gao}
\address{Beijing International Center for Mathematical Research, Peking University, Beijing 100871, China}
\email{gaoyibo@bicmr.pku.edu.cn}
\author{Rui Xiong}
\address{Department of Mathematics and Statistics, University of Ottawa, 150 Louis-Pasteur, Ottawa, ON, K1N 6N5, Canada}
\email{rxion043@uOttawa.ca}
\date{\today}
\begin{document}
\pagestyle{plain}
\begin{abstract}
We prove Samuel's conjecture on certain Graham positivity of the expansion coefficient of two double Schubert polynomials in three sets of variables by establishing a refined version of Graham's positivity theorem. As a corollary, we prove Kirillov's conjecture on the positivity of skew divided difference operators applied to Schubert polynomials. 
% Along the way, we also give a geometric interpretation to Billey's formula on specializations of Schubert polynomials. 
\end{abstract}
\maketitle
%\tableofcontents

\section{Introduction}\label{sec:intro}
Double Schubert polynomials $\{\S_w(\x;\y)\:|\: w\in S_n\}$, indexed by permutations, are polynomial representatives of Schubert classes in the torus-equivariant cohomology ring of the flag variety $\mathrm{Fl}_n(\C)$, where $\x=(x_1,\ldots,x_n)$ and $\y=(y_1,\ldots,y_n)$ are sequences of variables. They can be defined via divided difference operators, and enjoy rich algebraic and combinatorial properties. In particular, they can be computed via different combinatorial models including pipe dreams \cite{bergeron-billey,knutson-miller} and bumpless pipe dreams \cite{LLS}. 
% probably still need to add more context

In this paper, we focus on the coefficients $c_{u,v}^w(\y,\t)$ from the expansion 
\begin{equation}\label{eq:main}
\S_{u}(\x;\y)\cdot\S_{v}(\x;\t)=\sum_{w\in S_{\infty}}c_{u,v}^w(\y,\t)\cdot \S_{w}(\x;\t).
\end{equation}
In the case $\y=\t=\mathbf{0}$, $c_{u,v}^w:=c_{u,v}^w(\mathbf{0},\mathbf{0})\in\mathbb{N}$ is the \emph{Schubert structure constant} which is the core to Hilbert's fifteenth problem, and finding combinatorial models for these coefficients has sparkled tremendous interest throughout the years in the community of algebraic combinatorics and algebraic geometry. In the case where $u$ and $v$ are $k$-Grassmannian permutations, $\S_{u}(\x;\y)$ and $\S_{v}(\x;\t)$ are the \emph{factorial Schur polynomials}; Molev-Sagan \cite{molev-sagan} first provided a combinatorial formula for these coefficients and Knutson-Tao \cite{knutson-tao-triple} provided geometric meanings to the expansion. 

Our main result is the following:
\begin{theorem}[{\cite[Conjecture~1.1]{sammuel}}]
\label{thm:main}
For $u,v,w\in S_\infty$, 
$c_{u,v}^w(\y,\t)\in\mathbb{N}[t_i-y_j]_{i,j\geq 1}$.
\end{theorem}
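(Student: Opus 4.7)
I would realize the expansion coefficients $c_{u,v}^w(\y,\t)$ inside a geometric/equivariant framework in which $\{t_i-y_j : i,j\geq 1\}$ is the natural cone of positive weights, and then prove a strengthened Graham-type positivity theorem in that setting.

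\textbf{Cauchy reduction.} The starting point is the double Cauchy identity
\[
\S_u(\x;\y) = \sum_{u'} \S_{u'^{-1}u}(\t;\y)\,\S_{u'}(\x;\t),
\]
which translates \eqref{eq:main} into
\[
c_{u,v}^w(\y,\t) = \sum_{u'} \S_{u'^{-1}u}(\t;\y)\cdot c_{u',v}^w(\t,\t).
\]
By the pipe dream formula, each factor $\S_{u'^{-1}u}(\t;\y)$ already lies in $\mathbb{N}[t_i-y_j]$. But while classical Graham gives $c_{u',v}^w(\t,\t)\in\mathbb{N}[t_i-t_{i+1}]$, the simple root $t_i-t_{i+1}$ is provably \emph{not} a non-negative combination of the $t_i-y_j$ (a degree-$1$ comparison of $\y$-coefficients rules this out). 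Hence a term-by-term substitution is insufficient and a genuine refinement of Graham's theorem is required.

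\textbf{Refined Graham positivity.} I would isolate the required phenomenon as a Graham-type positivity theorem producing coefficients directly in $\mathbb{N}[t_i-y_j]$. A natural scaffold is an enlarged torus action --- for instance, $T_1\times T_2$ acting on $\mathrm{Fl}_n\times\mathrm{Fl}_n$ (with $T_1$ on the first factor carrying parameters $\y$ and $T_2$ on the second carrying $\t$), or a matrix Schubert variety setup in which $\{t_i-y_j\}$ appear as the weights of a natural linear representation. In either framework $\S_u(\x;\y)\S_v(\x;\t)$ is read off as a diagonal-type pullback of $[X_u]\otimes[X_v]$, and the refined theorem would assert that such pullbacks expand in the Schubert basis $\{\S_w(\x;\t)\}$ with coefficients in $\mathbb{N}[t_i-y_j]$. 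Its proof would follow Graham's original blueprint: build Bott--Samelson-type resolutions of the relevant Schubert subvarieties and degenerate along attracting one-parameter subgroups compatible with the enlarged torus, so that every normal-bundle weight arising lies in the cone generated by $\{t_i-y_j\}$.

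\textbf{Main obstacle.} The hardest step will be constructing this geometric/algebraic framework so that $\{t_i-y_j\}$ really constitutes the natural positive-weight cone --- identifying a space whose Bialynicki--Birula stratification is compatible with the Schubert stratifications on both factors, and verifying that every normal-bundle weight of each Bott--Samelson-type resolution lies in the prescribed cone. Once this refined positivity is in place, applying it to the pair $(\S_u(\x;\y),\S_v(\x;\t))$ yields Theorem~\ref{thm:main} immediately.
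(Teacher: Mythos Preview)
Your diagnosis is correct: a refinement of Graham's theorem is needed, and the Cauchy reduction cannot close the gap. But the proposal stops exactly where the content begins --- the construction you flag as the ``main obstacle'' is what the paper supplies, and it is neither of the frameworks you sketch. The paper does \emph{not} enlarge the torus or pass to $\mathrm{Fl}_n\times\mathrm{Fl}_n$ or to matrix Schubert varieties; it works inside a single flag variety $\mathrm{Fl}_{2n}$ for $G=\mathrm{GL}_{2n}$ with the ordinary rank-$2n$ torus, labeling the equivariant parameters $t_1,\dots,t_n,y_1,\dots,y_n$. The key device is the block-swap permutation $\tau\in S_{2n}$ ($i\leftrightarrow n{+}i$): then $[\tau\,\overline{B^-uB/B}]_T=\S_u(\x;\y)$ and $[\overline{B^-vB/B}]_T=\S_v(\x;\t)$, their intersection is generically transverse, and it is invariant under $N^-(\tau):=N^-\cap\tau N^-\tau^{-1}$. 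The inversion set $I(\tau)$ is exactly $\{y_j-t_i\}$.

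The refined Graham statement is then purely group-theoretic: for any $w\in W$, a $B^-(w)$-invariant effective cycle (with $B^-(w)=T\cdot N^-(w)$) expands in Schubert classes with coefficients in $\mathbb{N}[-\alpha]_{\alpha\in I(w)}$. Its proof is a short induction on $\ell(w)$, using the Lie-algebra check that $N^-(ws_i)\lhd N^-(w)$ when $ws_i>w$ and one invocation of the standard Graham degeneration lemma per step. No Bott--Samelson resolutions, no Bialynicki--Birula analysis of normal-bundle weights, no compatibility of stratifications on a product --- all of that machinery you anticipate is bypassed. Applied with $w=\tau$ this gives $c_{u,v}^w(\y,\t)\in\mathbb{N}[t_i-y_j]$ immediately. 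So the outline is right in spirit, but the concrete packaging (product torus, product of flags, degenerating resolutions) is the wrong direction; the actual construction is both simpler and sharper.
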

Besides the above-mentioned situations, a well-studied case of \Cref{thm:main} is that of \emph{separated descents} (\cite{huang-separated-descents,knutson-zinnjustin-separated-descents}), i.e. $\max\mathrm{Des}(u)\leq\min\mathrm{Des}(v)$, where Samuel \cite{sammuel} established a positive formula, and Fan, Guo and the second author \cite{fan-guo-xiong} found another formula using puzzles in the generality of double Grothendieck polynomials. 

Setting $\y=\mathbf{0}$, we obtain Kirillov's conjecture:
\begin{corollary}[{\cite[Conjecture~1]{kirillov}}]
\label{cor:kirillov}
For $u,v,w\in S_n$, $\partial_{w/v}\S_u(\x)\in\mathbb{N}[\x]$.
\end{corollary}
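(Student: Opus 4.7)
My plan is to deduce \Cref{cor:kirillov} as a direct specialization of \Cref{thm:main}. The first step is to substitute $\y = \mathbf{0}$ into \eqref{eq:main}, giving
$$\S_u(\x)\cdot \S_v(\x;\t) = \sum_{w\in S_\infty} c_{u,v}^w(\mathbf{0},\t)\,\S_w(\x;\t).$$
By \Cref{thm:main}, each coefficient $c_{u,v}^w(\mathbf{0},\t)$ lies in $\mathbb{N}[t_i - 0] = \mathbb{N}[t_i]$, since every generator $t_i - y_j$ collapses to $t_i$ under the substitution.

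The remaining step is to identify the polynomial $c_{u,v}^w(\mathbf{0},\t)$, viewed as a polynomial in $\t$, with Kirillov's skew divided difference $\partial_{w/v}\S_u(\x)$ under the variable identification $t_i \leftrightarrow x_i$. This identification is essentially the content of Kirillov's original formulation in \cite{kirillov}, and can be verified algebraically from the definitions. Concretely, I would fix a reduced word for $v$ and iteratively apply the corresponding $\t$-divided differences $\partial_i^{\t}$ to both sides of the displayed expansion. Using the standard recursions governing the action of $\partial_i^{\t}$ on double Schubert polynomials in the second set of variables (together with the Cauchy-type factorization underlying the very definition of $\partial_{w/v}$), the left-hand side reduces in a controlled way to $\partial_{w/v}\S_u(\x)$ while the right-hand side singles out exactly the coefficient $c_{u,v}^w(\mathbf{0},\t)$.

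The main obstacle in proving Kirillov's conjecture is therefore entirely contained in \Cref{thm:main}: once the refined Graham positivity asserted there is available, \Cref{cor:kirillov} follows by the formal identification above, which is pure bookkeeping in the nilHecke algebra acting on double Schubert polynomials. In other words, the substantive combinatorial content lies upstream in \Cref{thm:main}, and \Cref{cor:kirillov} is obtained by a routine specialization and change of variables.
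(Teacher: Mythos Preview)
Your strategy matches the paper's exactly: both deduce \Cref{cor:kirillov} from \Cref{thm:main} via the identity $\partial_{w/v}\S_u(\x)=c_{u,v}^w(\mathbf{0},\x)$, which the paper simply cites (in the form $\partial_{w/v}\S_u(\x;\y)=c_{u,v}^w(\y,\x)$) from \cite{sammuel} and \cite{fan-guo-xiong}.

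One caveat on your sketch for verifying this identity: applying $\t$-divided differences along a reduced word for $v$ does not work as stated, because on the right-hand side both the coefficients $c_{u,v}^w(\mathbf{0},\t)$ and the polynomials $\S_w(\x;\t)$ depend on $\t$, so $\partial_i^{\t}$ does not isolate a single coefficient. The clean derivation instead applies $\partial_w^{\x}$ to both sides of \eqref{eq:main}, expands the left-hand side via the skew Leibniz rule of \Cref{def:skew}, and then specializes $\x=\t$ (using $\S_\pi(\t;\t)=\delta_{\pi,\id}$ to kill all but one term on each side), yielding $c_{u,v}^w(\y,\t)=\partial_{w/v}\S_u(\x;\y)\big|_{\x=\t}$, i.e.\ $c_{u,v}^w(\y,\x)=\partial_{w/v}\S_u(\x;\y)$.
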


% This conjecture is a natural generalization of the positivity of Schubert structure constant. 
The skew divided difference operators $\partial_{w/v}$, first introduced by Macdonald \cite{macdonald_notes}, provide a method for computing Schubert structure constants. These operators have attracted significant attention due to their connections to Fomin-Kirillov algebras \cite{fomin-kirillov}
%; see \cite[Section 6]{kirillov} for details
. Their generalizations and positivity properties have been extensively studied in works including \cite{barligea-2018, berenstein-richmond-2015, liu-2015}.

Our proof relies on a refined version of Graham's positivity theorem \cite{graham-positivity} (\Cref{thm:graham-positivity-refined}). To apply this result, we establish a new geometric interpretation (\Cref{sub:triple-proof}) for the coefficients \(c_{u,v}^w(\mathbf{y},\mathbf{t})\), distinct from the one given by Knutson-Tao \cite{knutson-tao-triple}. A byproduct of this theorem is a geometric explanation for the positivity in Billey’s formula (\Cref{sub:billey}).

We end our discussion with a conjecture for Grothendieck polynomials (\Cref{sub:conjecture}).

\section{Proof of the main theorem}\label{sec:proof}
\subsection{A refined Graham positivity}
Let $G$ be a connected, complex, reductive algebraic group, $B\subset G$ be a Borel subgroup, $B^-$ be its opposite Borel subgroup, $T=B\cap B^-$ be a maximal torus, $N$ (resp., $N^-$) be the unipotent radical of $B$ (resp., $B^-$), $W=N_G(T)/T$ be the Weyl group and $\Phi$ be the associated root system, with positive roots $\Phi^+$ and simple roots $\Delta=\{\alpha_1,\ldots,\alpha_r\}$. For $\alpha\in\Phi$, write $s_{\alpha}$ for the corresponding reflection, and write $s_{i}$ for $s_{\alpha_i}$ for simplicity. For $w\in W$, its (left) \emph{inversion set} is $I(w):=\{\alpha\in\Phi^+\:|\: w^{-1}\alpha\in\Phi^-\}$ and its \emph{non-inversion set} is $J(w):=\{\alpha\in\Phi^+\:|\: w^{-1}\alpha\in\Phi^+\}=I(ww_0)$, where $w_0$ is the longest element in $W$. 
For each $w\in W$, we pick a representative of it in $N_G(T)$, and also denote it by $w$, slightly abusing notation. 

The \emph{flag variety} $G/B$ admits a \emph{Bruhat decomposition} $\bigsqcup_{w\in W}B^- w B/B$ into \emph{Schubert cells}. Their closures $\overline{B^-w B/B}\subseteq G/B$ are the \emph{Schubert varieties}. The \emph{Schubert classes} $\{[\overline{B^- w B/B}]_T\:|\: w\in W\}$
% , which are the cohomology classes dual to the fundamental classes of the Schubert varieties $\{[\overline{BwB/B}]: w\in W\}$, 
form an $H_T^*(\mathsf{pt})$-basis of $H_T^*(\mathrm{Fl}_m(\C))$.

We define the following closed subgroups of $G$.

\begin{definition}
For $w\in W$, define $N^-(w):=N^-\cap wN^- w^{-1}$ and $B^-(w)=T\cdot N^-(w)$.
\end{definition}

By \cite[Section 28]{humphreys_algebraic_group}, as a variety, $N^-(w)$ is isomorphic to the Schubert cell $B^-wB/B\cong \mathbb{C}^{\ell(w_0)-\ell(w)}$ via $x\mapsto xwB/B$. In particular, $N^-(w)$ is connected. Its Lie algebra is 
$$\mathfrak{n}^-(w)=\operatorname{span}(F_{\alpha}\:|\: \alpha\in J(w))\subseteq \mathfrak{g}:=\operatorname{Lie}G$$
where $F_\alpha$ is a root vector of weight $-\alpha$.

\begin{lemma}\label{lem:normal-subgroup}
If $ws_i>w$, then $N^-(ws_i)$ is a normal subgroup of $N^-(w)$.
\end{lemma}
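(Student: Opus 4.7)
The plan is to verify normality at the Lie-algebra level. Since the text has already recorded that $N^-(w)$ is connected, $N^-(ws_i)$ is a normal subgroup of $N^-(w)$ if and only if $\mathfrak{n}^-(ws_i)$ is an ideal in $\mathfrak{n}^-(w)$. Given the root-space description $\mathfrak{n}^-(w)=\operatorname{span}(F_\alpha:\alpha\in J(w))$, this reduces the problem to a computation with root brackets that I can carry out combinatorially on inversion sets.

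The first step is to pin down the extra root that $\mathfrak{n}^-(w)$ contributes over $\mathfrak{n}^-(ws_i)$. Unwinding the definition, $\alpha\in J(w)\setminus J(ws_i)$ means $w^{-1}\alpha\in\Phi^+$ while $s_iw^{-1}\alpha\in\Phi^-$; since the only positive root sent to a negative root by $s_i$ is $\alpha_i$, this forces $\alpha=w\alpha_i$. The hypothesis $ws_i>w$ guarantees $w\alpha_i\in\Phi^+$, so
\[
J(w)=J(ws_i)\sqcup\{w\alpha_i\}, \qquad \mathfrak{n}^-(w)=\mathfrak{n}^-(ws_i)\oplus\C\,F_{w\alpha_i}.
\]
Normality therefore collapses to checking $[F_{w\alpha_i},F_\beta]\in\mathfrak{n}^-(ws_i)$ for every $\beta\in J(ws_i)$.

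The bracket $[F_{w\alpha_i},F_\beta]$ is a scalar multiple of $F_{w\alpha_i+\beta}$ and is nonzero only when $\gamma:=w\alpha_i+\beta\in\Phi^+$. In that case I will verify $\gamma\in J(ws_i)$, i.e. $(ws_i)^{-1}\gamma\in\Phi^+$. Rewriting,
\[
(ws_i)^{-1}\gamma = s_iw^{-1}(w\alpha_i+\beta)=s_i\bigl(\alpha_i+w^{-1}\beta\bigr).
\]
Now $w^{-1}\beta\in\Phi^+$ because $\beta\in J(ws_i)\subseteq J(w)$; also $\alpha_i+w^{-1}\beta=w^{-1}\gamma$ is a root, hence a positive root (sum of two positive roots). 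Since $w^{-1}\beta\ne 0$, this positive root is distinct from $\alpha_i$, and $s_i$ therefore sends it to another positive root, as needed.

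The main potential obstacle is the sign-chasing in the final step, but it dissolves once one notices $\alpha_i+w^{-1}\beta\ne\alpha_i$. The only other point requiring care is the passage from ``ideal in the Lie algebra'' back to ``normal subgroup'', which is standard for connected complex algebraic groups and directly uses the connectedness of $N^-(w)$ already quoted from \cite{humphreys_algebraic_group}. A purely group-theoretic verification is also possible by conjugating $N^-\cap ws_iN^-s_iw^{-1}$ by elements of $N^-\cap wN^-w^{-1}$, but the Lie-algebra route above is the cleanest.
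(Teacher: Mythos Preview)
Your proposal is correct and follows essentially the same approach as the paper: both identify $J(w)=J(ws_i)\sqcup\{w\alpha_i\}$, reduce normality to showing $\mathfrak{n}^-(ws_i)$ is an ideal of $\mathfrak{n}^-(w)$, and verify this via the root-bracket relations, invoking \cite{humphreys_algebraic_group} for the passage back to the group level. The only cosmetic difference is that the paper phrases the key step as ``$\gamma\in J(w)$ and $\gamma\neq w\alpha_i$, hence $\gamma\in J(ws_i)$'', whereas you compute $(ws_i)^{-1}\gamma=s_i(\alpha_i+w^{-1}\beta)$ directly; these are the same observation.
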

\begin{proof}
% The Lie algebra of $N^-(w)$ is $\mathfrak{n}^-(w)=\mathrm{span}(F_{\alpha}\:|\: \alpha\in J(w))$. 
Recall that $[F_{\alpha},F_{\beta}]\in\C^{\times}F_{\alpha+\beta}$ if $\alpha+\beta$ is a root, and $[F_{\alpha},F_{\beta}]=0$ otherwise. As $ws_i>w$, we have $J(w)=J(ws_i)\cup\{w\alpha_i\}$ and thus $\mathfrak{n}^-(ws_i)\subset \mathfrak{n}^-(w)$, $N^-(ws_i)$ is a closed subgroup of $N^-(w)$ by \cite[Theorem 13.1]{humphreys_algebraic_group}. In fact, $\mathfrak{n}^-(ws_i)$ is an ideal of $\mathfrak{n}^-(w)$. To check this, take any $\alpha\in J(ws_i)$ and $\beta\in J(w)$, and it suffices to show $[F_{\alpha},F_{\beta}]\in \mathfrak{n}^-(ws_i)$. If $\alpha+\beta\notin\Phi$, $[F_{\alpha},F_{\beta}]=0$. Thus, consider $\gamma=\alpha+\beta\in\Phi^+$. If $\beta\in J(ws_i)$, then $\gamma\in J(ws_i)$ as well; and if $\beta=w\alpha_i$, $\gamma\neq w\alpha_i\in J(w)$, so $\gamma\in J(ws_i)$. Indeed, $\mathfrak{n}^-(ws_i)$ is an ideal of $\mathfrak{n}^-(w)$ and by \cite[Theorem 13.3]{humphreys_algebraic_group}, $N^-(ws_i)$ is a normal subgroup of $N^-(w)$. 
\end{proof}

We are now in a position to state a refined version of Graham positivity theorem. 

\begin{theorem}\label{thm:graham-positivity-refined}
Let $B^-$ act on a non-singular variety $X$, and let $Y$ be a $B^-(w)$-invariant effective cycle in $X$. Then there exist $B^-$-invariant effective cycles $Z_1,\ldots,Z_m$ such that 
\[[Y]_T\in \sum_{i=1}^m\mathbb{N}[-\alpha]_{\alpha\in I(w)}\cdot [Z_i]_T\]
in $H_T^*(X)$. 
\end{theorem}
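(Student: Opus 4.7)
The plan is to argue by induction on $\ell(w)$, peeling off one simple reflection at a time and absorbing a single negative root from $I(w)$ into the coefficient at each step. The base case $w=e$ is immediate: $B^-(e)=B^-$ and $I(e)=\emptyset$, so one may take $Z_1=Y$.

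For the inductive step, assume $\ell(w)>0$, pick a simple reflection $s_i$ with $w>w':=ws_i$, and note that by \Cref{lem:normal-subgroup} applied to $w'$ (using $w's_i=w>w'$), $N^-(w)$ is a normal subgroup of $N^-(w')$ with quotient $N^-(w')/N^-(w)\cong\mathbb{G}_a$ generated by $\exp(tF_{w'\alpha_i})$. Here $w'\alpha_i=-w\alpha_i$ is a positive root, and a direct check using the reflection $s_i$ shows $I(w')=I(w)\setminus\{w'\alpha_i\}$. Because $B^-(w)=T\cdot N^-(w)$ is connected, it fixes each irreducible component of $Y$ setwise, so one may assume $Y$ is irreducible. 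Form $\tilde Y:=\overline{N^-(w')\cdot Y}$, which is an irreducible $B^-(w')$-invariant closed subvariety of $X$ containing $Y$. If $\dim\tilde Y=\dim Y$, then $\tilde Y=Y$, so $Y$ itself is $B^-(w')$-invariant and the inductive hypothesis immediately gives the desired expansion, using $I(w')\subset I(w)$.

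Otherwise $\dim\tilde Y=\dim Y+1$, and I would consider the orbit map
\[ f\colon\mathbb{A}^1\times Y\longrightarrow X,\qquad (t,y)\longmapsto\exp(tF_{w'\alpha_i})\cdot y, \]
which is $T$-equivariant when $T$ acts on $\mathbb{A}^1$ with weight $\mu=-w'\alpha_i$, and is generically finite of some positive integer degree $d$ onto $\tilde Y$ since $\dim(\mathbb{A}^1\times Y)=\dim\tilde Y$. A standard equivariant computation---using $[\{0\}\times Y]_T=\mu\cdot[\mathbb{A}^1\times Y]_T$ in $H_T^*(\mathbb{A}^1\times Y)$, the fact that $f$ restricts to the inclusion on $\{0\}\times Y\cong Y$, and the projection formula---yields
\[ [Y]_T\;=\;d\cdot(-w'\alpha_i)\cdot[\tilde Y]_T \quad\text{in}\;H_T^*(X). \]
Since $w'\alpha_i\in I(w)$ and $d\in\mathbb{Z}_{>0}$, the scalar $d(-w'\alpha_i)$ lies in $\mathbb{N}[-\alpha]_{\alpha\in I(w)}$; applying the inductive hypothesis to the $B^-(w')$-invariant cycle $\tilde Y$ then expands $[\tilde Y]_T$ into the desired form, completing the induction.

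The main obstacle I anticipate is making the pushforward identity in the second case fully rigorous: the map $f$ is not proper because $\mathbb{A}^1$ is not complete, so one cannot naively invoke $f_*$ on equivariant cohomology. I would address this either by working in equivariant Borel--Moore homology (where pushforward along any morphism is defined), by factoring $f$ through the proper closed embedding $\tilde Y\hookrightarrow X$ and treating the generically finite map $\mathbb{A}^1\times Y\to\tilde Y$ directly via a compactification or equivariant resolution, or by restricting to an open subset of $\tilde Y$ on which the $\mathbb{G}_a$-action is free so that $Y$ appears as a transversal slice whose equivariant class is manifestly $(-w'\alpha_i)$ times the multiplicity $d$. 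Beyond this technical check, the scheme mirrors the standard inductive proof of Graham's theorem; the refinement is that the one-step filtration $N^-(w)\trianglelefteq N^-(w')$ from \Cref{lem:normal-subgroup} lets one pinpoint exactly which positive roots can appear.
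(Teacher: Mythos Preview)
Your inductive scheme is exactly the paper's: induct on $\ell(w)$, use \Cref{lem:normal-subgroup} to obtain $N^-(w)\trianglelefteq N^-(w')$ with one-dimensional unipotent quotient of $T$-weight $\chi=-w'\alpha_i$, and pass from $B^-(w)$-invariance to $B^-(w')$-invariance. The paper carries out that one-step descent by citing \cite[Proposition~19.4.4]{AF}, which produces $B^-(w')$-invariant effective cycles $Z_1,Z_2$ with $[Y]_T=[Z_1]_T+\chi\,[Z_2]_T$. Your attempt to reprove this step in Case~2 is where a genuine gap lies: the displayed identity $[Y]_T=d\,(-w'\alpha_i)\,[\tilde Y]_T$ is not merely awaiting a properness fix, it is false in general. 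Take $G$ of rank one, $X=G/B\cong\mathbb{P}^1$, $w=s_1$, $w'=e$, and $Y$ the $T$-fixed point moved by $N^-$. Then $\tilde Y=\mathbb{P}^1$, $d=1$, and your formula reads $[Y]_T=\chi\cdot 1$, which under the forgetful map to ordinary cohomology gives $[Y]=0$, contradicting the fact that $[Y]$ is the nonzero point class.

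What is missing is the contribution of the boundary $\tilde Y\setminus(\mathbb{G}_a\cdot Y)$. Your map $f$ fails to be proper precisely because its image omits this boundary; if you compactify $\mathbb{A}^1$ to $\mathbb{P}^1$ and take the closure of the graph (one of the repairs you propose), the fiber over $\infty$ yields an effective $B^-(w')$-invariant cycle $Z_1$ supported there, and the correct relation becomes $[Y]_T=[Z_1]_T+\chi\,[\tilde Y]_T$ up to multiplicities (in the $\mathbb{P}^1$ example, $Z_1$ is the other fixed point). Since $Z_1$ is again $B^-(w')$-invariant, the induction is easily salvaged once this term is restored---but that is exactly the content of the proposition the paper invokes.
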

\begin{proof}
We use induction on $\ell(w)$. 
When $w=\mathsf{id}$, $B^-(w)=B^-$, so there is nothing to prove. 
Assume the theorem holds for $w$. 
Let us prove the statement for $ws_i>w$.  
By \Cref{lem:normal-subgroup}, the pair $B^-(ws_i)\subset B^-(w)$ satisfies the condition (see \cite[p. 384]{AF}) of \cite[Proposition 19.4.4]{AF} with 
$\chi = -w\alpha_i$. So there exists $B^-(w)$-invariant effective cycles $Z_1$ and $Z_2$ such that 
$[Y]_T=[Z_1]_T+\chi [Z_2]_T.$
Since $I(ws_i)=I(w)\cup \{w\alpha_i\}$, the inductive step is now established.
\end{proof}

\begin{corollary}\label{coro:G/B}
Let $X=G/B$ be the flag variety. Under the above setting, we have 
\[[Y]_T=\sum_{w\in W}\mathbb{N}[-\alpha]_{\alpha\in I(w)}\cdot [\overline{B^-wB/B}]_T\]
in $H^*_T(G/B)$. 
\end{corollary}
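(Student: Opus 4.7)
The plan is to obtain the corollary as a direct specialization of \Cref{thm:graham-positivity-refined} to the flag variety, combined with the classical fact that $B^-$-invariant effective cycles on $G/B$ are non-negative integer combinations of Schubert varieties.

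First, I would apply \Cref{thm:graham-positivity-refined} to $Y$ inside $X=G/B$: it produces $B^-$-invariant effective cycles $Z_1,\ldots,Z_m\subseteq G/B$ such that
\[
[Y]_T \;\in\; \sum_{i=1}^m \mathbb{N}[-\alpha]_{\alpha\in I(w)}\cdot [Z_i]_T
\]
in $H^*_T(G/B)$. Second, I would invoke the Bruhat decomposition $G/B=\bigsqcup_{v\in W}B^-vB/B$: since the $B^-$-orbits on $G/B$ are exactly the Schubert cells, every irreducible $B^-$-invariant closed subvariety of $G/B$ is a Schubert variety $\overline{B^-vB/B}$. Consequently each effective cycle $Z_i$ decomposes in the cycle group as a non-negative integer combination of Schubert varieties, so each $[Z_i]_T$ is correspondingly a non-negative integer combination of the Schubert classes $[\overline{B^-vB/B}]_T$. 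Substituting this decomposition into the display above and collecting the coefficient of each Schubert class gives the claimed positive expansion.

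I do not foresee a serious obstacle at this stage: the nontrivial content is already concentrated in \Cref{thm:graham-positivity-refined}, and the corollary essentially records its specialization to the flag variety, where the auxiliary $B^-$-invariant cycles $Z_i$ promote automatically to genuine Schubert classes with non-negative integer multiplicities. The only point deserving any care is observing that the product of a polynomial in $\mathbb{N}[-\alpha]_{\alpha\in I(w)}$ with a non-negative integer still lies in the same positive polynomial semi-ring, which is immediate.
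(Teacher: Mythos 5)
Your argument is correct and coincides with the paper's proof: apply \Cref{thm:graham-positivity-refined} and then use that $G/B$ has finitely many $B^-$-orbits, so every $B^-$-invariant effective cycle is a non-negative integer combination of Schubert varieties. No further comment is needed.
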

\begin{proof}
Since flag variety has finite many $B^-$-orbits, any $B^-$-invariant effective cycle over $G/B$ must be a non-negative combination of Schubert classes $[\overline{B^-wB/B}]_T$. 
\end{proof}

\begin{remark}
When $w=w_0$ is the longest element, $B^-(w_0)=T$ and \Cref{thm:graham-positivity-refined} gives Graham's positivity theorem \cite[Theorem 3.2]{graham-positivity}.
Our assumption is stronger than that of Graham's, since a $B^-(w)$-invariant cycle is necessarily $T$-invariant, yielding a stronger positivity result where roots are restricted to $I(w)$ rather than all positive roots.
\end{remark}

\subsection{A geometric explanation of positivity in Billey's formula}\label{sub:billey}
As an application of \Cref{thm:graham-positivity-refined}, we give a geometric explanation of positivity in Billey's formula \cite{Billey-formula}. See \cite{Tymoczko-Billey-survey} for a great survey on this subject.

The $T$-fixed points of $G/B$ are $(G/B)^T=\{wB/B\:|\:w\in W\}$. 
Define the localization to be the restriction map
$\cdot|_w: H_T^*(G/B)\to H_T^*(wB/B)\simeq H_T^*(\mathsf{pt}).$
Billey's formula \cite{Billey-formula} is a combinatorial formula of the localization of Schubert classes $[\overline{B^-uB/B}]_T|_w$ for any $u,w\in W$. From its explicit form, which we do not provide here, we see that 
\begin{equation}\label{eq:Billey+}
{}[\overline{B^-uB/B}]_T|_{w} \in \mathbb{N}[\alpha]_{\alpha\in I(w)}.\end{equation}
We provide a geometric explanation of this positivity using \Cref{coro:G/B}.

Recall that the point $w_0B/B=B^-w_0B/B$ is invariant under $B^-$.
So the torus fixed point $ww_0B/B$ is invariant under $wB^-w^{-1}$. 
This implies that $ww_0B/B$ is invariant under $B^-(w)$. 
By \Cref{coro:G/B}, we have
$$[ww_0B/B]_T \in \sum_{u\in W} \mathbb{N}[-\alpha]_{\alpha\in I(w)}\cdot [\overline{B^-uB/B}]_T.$$
Following \cite[Section 3]{MNS-left-action} and \cite[Section 16.5]{AF}, for a Weyl group element $w\in W$, the automorphism $gB\mapsto wgB/B$ induces a left action $w^L: H_T^*(G/B)\to H_T^*(G/B)$. We remark that $w^L$ is not $H_T^*(\mathsf{pt})$-linear unless $w=\mathsf{id}$, but it is  semilinear with respect to the automorphism of $H_T^*(\mathsf{pt})$ induced by $w$. Applying $w_0^L$ to the above, we get 
$$[w_0ww_0B/B]_T \in \sum_{u\in W} \mathbb{N}[-w_0\alpha]_{\alpha\in I(w)}\cdot [\overline{Bw_0uB/B}]_T.$$
As $I(w_0ww_0)=\{-w_0\alpha\:|\:\alpha\in I(w)\}$, replacing $w_0ww_0$ by $w$ and $w_0u$ by $u$, we can rewrite 
\begin{equation}\label{eq:point-to-Schubert}
[wB/B]_T \in \sum_{u\in W} \mathbb{N}[\alpha]_{\alpha\in I(w)}\cdot [\overline{BuB/B}]_T.
\end{equation}
Now let us take the Poincar\'e pairing with $[\overline{B^-uB/B}]_T$ on both sides. 
The left-hand side is $[\overline{B^-uB/B}]_T|_{w}$, and the right-hand side is the coefficient of $[\overline{BuB/B}]_T$ in \eqref{eq:point-to-Schubert} 
by \cite[Proposition 7.3]{AF}.
This gives \eqref{eq:Billey+}.

\subsection{Application to triple Schubert calculus}\label{sub:triple-proof}
We restrict to the case of $G=\mathrm{GL}_{m}(\C)$. By \cite[Theorem 10.6.4]{AF}, the class $[\overline{B^-\pi B/B}]_T$ is represented by the \emph{double Schubert polynomial} $\S_{\pi}(\x;\t)$. We will not be working with the algebraic definition of $\S_{\pi}(\x;\t)$'s. For any permutation $\pi\in S_m$, we naturally identify it as $\pi\in S_m\hookrightarrow S_{\infty}$ via $\pi(k)=k$ for all $k>m$.

% Fix $n$ and fix permutations $u,v,w\in S_n$ as in \Cref{sec:intro}. 
Fix permutations $u,v$. By picking $n\gg 0$, we can assume \Cref{eq:main} only involves those $w\in S_n$.
% (see \cite{hardt-wallach-stablize} for when the product stablizes).
Let $G:=\mathrm{GL}_{2n}(\C)$ and $B,B^-,T$ be as above. We identify $H_{T}^*(\mathsf{pt})=\Z[t_1,\ldots,t_{2n}]$ with $t_i=-\epsilon_i:=-c_1(\C_{\epsilon_i})$, 
where \(\epsilon_{i}\) is the character of \(T\) corresponding to the \(i\)-th diagonal entry.  
% where $\epsilon_i$ is the character of the $i$-th projection of diagonal entry of $T$. 
We rename the variables $t_{n+i}=y_i$ for all $i\in[n]:=\{1,2,\ldots,n\}$. Consider a special permutation $\tau\in S_{2n}$ such that $\tau(i)=n+i$, $\tau(n+i)=i$ for all $i\in[n]$. By \cite[Section 16.5]{AF}, $[\tau\overline{B^-uB/B}]_T$ is represented by $\S_u(\x;\tau\t)=\S_u(\x;\y)$.

\begin{lemma}\label{lem:transversality}
The intersection $\tau\overline{B^-uB/B} \cap\overline{B^-vB/B}$ is proper and transverse at the generic point.
\end{lemma}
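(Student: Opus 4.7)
The strategy rests on the reducedness identity
\[
\ell(v^{-1}\tau u) \;=\; \ell(u) + \ell(v) + \ell(\tau).
\]
I would first verify this by writing $v^{-1}\tau u \in S_{2n}$ in one-line notation as
\[
(u(1)+n,\; u(2)+n,\; \ldots,\; u(n)+n,\; v^{-1}(1),\; \ldots,\; v^{-1}(n))
\]
and counting inversions: $\ell(u)$ arise among the first $n$ positions, $\ell(v)$ arise among the last $n$, and all $n^2 = \ell(\tau)$ cross-pairs are inversions since every image of a first-half position exceeds every image of a second-half position. Hence $v^{-1}\cdot\tau\cdot u$ is a reduced product in $S_{2n}$.

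Next, I would establish the expected dimension by parametrizing the open intersection $\tau B^- u B/B \,\cap\, B^- v B/B$ via pairs $(n_1, n_2) \in N^-(u) \times N^-(v)$ satisfying $\tau n_1 u \in n_2 v B$, equivalently $(n_2 v)^{-1}\tau(n_1 u) \in B \subset G$. Since $\dim \bigl(N^-(u)\times N^-(v)\bigr) = 2\binom{2n}{2}-\ell(u)-\ell(v)$ and $B$ has codimension $\binom{2n}{2}$ in $G$, the expected dimension of the intersection is $\binom{2n}{2}-\ell(u)-\ell(v)$. The reducedness of $v^{-1}\tau u$ forces this expected dimension to be attained: the image of the evaluation map lies in the $(B^-,B)$-double coset $B^-(v^{-1}\tau u)B$, whose dimension $\binom{2n}{2}-\ell(v^{-1}\tau u)+\dim B$ is as large as possible precisely when the factorization is reduced, which in turn implies via a standard dimension-counting argument that the preimage of $B$ is smooth of the expected codimension.

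From the smoothness of this open intersection, both properness and generic transversality follow. The closure of the open intersection inside $\tau\overline{B^- uB/B}\cap \overline{B^- vB/B}$ is an irreducible component of the expected codimension $\ell(u)+\ell(v)$, so the intersection is proper; at any smooth point of this component, the tangent-space dimension formula $\dim(T_p \tau X^u \cap T_p X^v) = \dim T_p \tau X^u + \dim T_p X^v - \dim T_p(G/B)$ then forces the two tangent spaces to meet transversely, establishing transversality at the generic point.

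\textbf{Main obstacle.} The crux is translating the combinatorial reducedness of $v^{-1}\tau u$ into the smoothness of the open intersection. The cleanest approach is to construct a Bott--Samelson-style desingularization of the Schubert variety $\overline{B^-(v^{-1}\tau u)B/B}$ using a reduced word for $v^{-1}\tau u$ obtained by concatenating reduced words for $v^{-1}$, $\tau$, and $u$, from which the dimension and smoothness of the intersection follow inductively along the word. Alternatively, a direct triangular elimination in the equation $(n_2 v)^{-1}\tau(n_1 u)\in B$, solving coordinates one simple root at a time in the order dictated by the reduced word, gives a more computational but equally valid argument.
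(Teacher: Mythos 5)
Your opening computation is correct and isolates a genuine combinatorial fact: in one-line notation $v^{-1}\tau u=(u(1)+n,\ldots,u(n)+n,v^{-1}(1),\ldots,v^{-1}(n))$, so indeed $\ell(v^{-1}\tau u)=\ell(u)+\ell(v)+n^2=\ell(u)+\ell(v)+\ell(\tau)$, and the bookkeeping of expected dimensions for the map $\phi(n_1,n_2)=(n_2v)^{-1}\tau(n_1u)$ is also right. However, the step that is supposed to convert this reducedness into smoothness of $\phi^{-1}(B)$ does not hold as written. Since $v^{-1}n_2^{-1}\in N^-v^{-1}$ and $n_1u\in uN^-$, the image of $\phi$ is contained in $N^-\,(v^{-1}\tau u)\,N^-$, and this set is \emph{not} contained in the single $(B^-,B)$-double coset $B^-(v^{-1}\tau u)B$: already in $\mathrm{GL}_2$, for $n\in N^-$ generic the element $sn$ lies in the open coset $B^-B$ rather than in $B^-sB$. (Relatedly, ``as large as possible precisely when the factorization is reduced'' is backwards for $(B^-,B)$-cosets: reducedness makes $\ell(v^{-1}\tau u)$ maximal, hence $\dim B^-(v^{-1}\tau u)B$ \emph{minimal}.) So the ``standard dimension-counting argument'' has no valid input, and the smoothness of $\phi^{-1}(B)$ --- which you yourself flag as the main obstacle --- remains unproved; the Bott--Samelson and triangular-elimination routes are named but not executed. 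There is a second, independent gap: properness requires \emph{every} irreducible component of $\tau\overline{B^-uB/B}\cap\overline{B^-vB/B}$ to have codimension $\ell(u)+\ell(v)$, whereas you only produce one component of the right dimension from the open stratum. You would need to run the same estimate on each boundary stratum $\tau B^-w'B/B\cap B^-w''B/B$ with $w'\geq u$, $w''\geq v$ (where, by the same length identity, the expected dimension strictly drops) to rule out excess components hiding in the boundary.

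For comparison, the paper sidesteps all of this with a translation trick: since $u,v\in S_n$, both $\overline{B^-uB/B}$ and $\overline{B^-vB/B}$ are invariant under $u_0=1\times w_0^{(n)}$, the longest element of the parabolic generated by $s_{n+1},\ldots,s_{2n-1}$, so the intersection equals $\tau u_0\overline{B^-uB/B}\cap u_0\overline{B^-vB/B}$. As $u_0^{-1}\tau u_0=w_0^{(2n)}$, translating by $u_0^{-1}$ turns this into $\overline{Bw_0uB/B}\cap\overline{B^-vB/B}$, a Richardson-type intersection of opposite Schubert varieties, for which properness and generic transversality are standard \cite[Section 19.3]{AF}. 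If you want to keep your direct approach, the honest fix is either this kind of reduction or a fully executed Bott--Samelson/Kleiman argument, not the double-coset dimension count.
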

\begin{proof}
Let $w_0^{(m)}\in S_{m}$ be the longest permutation $m\ m{-}1\cdots 1$. For permutations $w,\pi\in S_n$, write $w\times \pi\in S_{2n}$ as the direct sum of $w$ and $\pi$; that is, $w\times\pi(i)$ is $w(i)$ if $i\leq n$, and is $\pi(i-n)+n$ if $i>n$. Since $v\in S_n$, $s_iv>v$ for $n\leq i<2n$, and thus $\overline{B^-vB/B}$ is invariant under $s_i$, where $s_i=(i\ i{+}1)$ is the simple transposition. Therefore, we have $\overline{B^-vB/B}=u_0\overline{B^-vB/B}$ where $u_0=1\times w_0^{(n)}$. Similarly, $\tau\overline{B^-uB/B}=\tau u_0\overline{B^-uB/B}$. As $u_0^{-1}\tau u_0=w_0^{(2n)}$, the lemma follows from \cite[Section 19.3]{AF}.
\end{proof}

We are now ready to prove our main theorem.
\begin{proof}[Proof of \Cref{thm:main}]
By \Cref{lem:transversality}, we can rewrite the coefficients of interest via \[[\tau\overline{B^-uB/B}\cap\overline{B^-vB/B}]_T=\sum_{w\in S_{n}}c_{u,v}^w(\y,\t)\cdot [\overline{B^-wB/B}]_T.\]
Since $\tau\overline{B^-uB/B}$ is closed under $\tau N^-\tau^{-1}$ and $\overline{B^-vB/B}$ is closed under $N^-$, the intersection is closed under $N^-\cap \tau N^-\tau^{-1}=:N^-(\tau)$. By \Cref{coro:G/B}, we conclude that $c_{u,v}^w(\y,\t)\in\mathbb{N}[-\alpha]_{\alpha\in I(\tau)}$, where we compute $I(\tau)=\{y_j-t_i\:|\:1\leq i,j\leq n\}$.
\end{proof}

Next we recall some definitions on (skew) divided difference operators.
\begin{definition}[{\cite{macdonald_notes}}]\label{def:skew}
The \emph{skew divided difference operator} $\partial_{w/u}$ is characterized by \[\partial_w(fg)=\sum_{v}\partial_{w/v}(f)\partial_v(g)\]
for all polynomials $f$ and $g$. Here, $\partial_w:=\partial_{i_1}\cdots\partial_{i_{\ell}}$ for any reduced word $w=s_{i_1}\cdots s_{i_{\ell}}$, and $\partial_i(f)=(f-s_i f)/(x_i-x_{i+1})$ is the \emph{divided difference operator}. 
\end{definition}
See \cite[Definition 4]{kirillov} for a more explicit definition of $\partial_{w/u}$.
\begin{proof}[Proof of \Cref{cor:kirillov}]
By \cite[Section 2 p. 5]{sammuel} or \cite[Proposition 6.4]{fan-guo-xiong}, $\partial_{w/v}\S_u(\x;\y)=c_{u,v}^w(\y,\x)$. Setting $\y=\mathbf{0}$, we obtain that $\partial_{w/v}\S_u(\x)=c_{u,v}^w(\mathbf{0},\x)\in\mathbb{N}[\x]$ by \Cref{thm:main}.
\end{proof}

\subsection{A positivity Conjecture for Grothendieck polynomials}\label{sub:conjecture}
Let $\beta$ be a formal variable and let $\G_w(\x;\y)$ be the \emph{double Grothendieck polynomial} for $w\in S_n$. We use the convention consistent with that of \cite{LLS-Ktheory}. Consider the expansion \[\G_u(\x;\y)\cdot\G_v(\x;\t)=\sum_{w\in S_\infty}\tilde{c}_{u,v}^{\,w}(\y,\t)\cdot \G_w(\x;\t).\]
We propose the following conjecture, inspired by \Cref{thm:main}.
\begin{conjecture}
For $u,v,w\in S_\infty$, $\tilde{c}_{u,v}^{\,w}(\y,\t)\in\mathbb{N}[\beta][t_i\ominus y_j]_{i,j\geq 1}$, where \(a\ominus b :=\frac{a-b}{1+\beta b}\). 
\end{conjecture}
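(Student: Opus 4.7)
The plan is to transplant the argument of Section~\ref{sub:triple-proof} into torus-equivariant $K$-theory, where by \cite{LLS-Ktheory} the double Grothendieck polynomial $\G_w(\x;\y)$ represents the class of the Schubert structure sheaf $[\mathcal{O}_{\overline{B^-wB/B}}]_T$ in $K_T(G/B)$ for $G=\mathrm{GL}_{2n}(\C)$. Exactly as in the proof of Theorem~\ref{thm:main}, one would choose the permutation $\tau\in S_{2n}$ that interchanges the two sets of equivariant parameters; then $\tilde c^{\,w}_{u,v}(\y,\t)$ is the coefficient of $[\mathcal{O}_{\overline{B^-wB/B}}]_T$ in the product $[\mathcal{O}_{\tau\overline{B^-uB/B}}]_T\cdot [\mathcal{O}_{\overline{B^-vB/B}}]_T$. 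The operation $\ominus$ is the subtraction in the multiplicative formal group law $a\oplus b=a+b+\beta ab$ that governs $K_T(\mathsf{pt})$, so the conjectured positivity is the natural $K$-theoretic shadow of Theorem~\ref{thm:main}.

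I would proceed in three steps. First, establish a $K$-theoretic refinement of Theorem~\ref{thm:graham-positivity-refined}: if $Y\subset X$ is a $B^-(w)$-invariant subscheme with, say, rational singularities, then
\[
[\mathcal{O}_Y]_T\in\sum_i\mathbb{N}[\beta]\bigl[\ominus\alpha\bigr]_{\alpha\in I(w)}\cdot [\mathcal{O}_{Z_i}]_T
\]
for some $B^-$-invariant effective cycles $Z_i$, where $\ominus\alpha$ denotes the image of $-\alpha$ under the formal group law. The outline would be the same induction on $\ell(w)$ used in Theorem~\ref{thm:graham-positivity-refined}, reducing $B^-(ws_i)$-equivariance to $B^-(w)$-equivariance one simple reflection at a time via Lemma~\ref{lem:normal-subgroup}; the one-step cohomological input from \cite[Proposition~19.4.4]{AF} has to be replaced by its $K$-theoretic analogue, namely the Anderson--Griffeth--Miller positivity theorem in its divisorial form, each application of which produces precisely a factor in $\mathbb{N}[\beta][\ominus(w\alpha_i)]$. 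Second, verify that $\tau\overline{B^-uB/B}\cap\overline{B^-vB/B}$ is a Richardson variety (the argument of Lemma~\ref{lem:transversality} carries over verbatim), hence has rational singularities and is Cohen--Macaulay; by standard results of Brion this forces
\[
[\mathcal{O}_{\tau\overline{B^-uB/B}}]_T\cdot[\mathcal{O}_{\overline{B^-vB/B}}]_T=[\mathcal{O}_{\tau\overline{B^-uB/B}\cap\overline{B^-vB/B}}]_T
\]
in $K_T(G/B)$ without higher-Tor corrections. Third, apply the refined positivity of the first step with $w=\tau$ to this intersection, and translate the inversion characters $\ominus\alpha$ with $\alpha\in I(\tau)=\{y_j-t_i\:|\:1\le i,j\le n\}$ into the form $t_i\ominus y_j$ of the conjecture.

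The main obstacle will be the first step. Graham's additive positivity is proved by an elementary $\mathbb{A}^1$-bundle construction, but the Anderson--Griffeth--Miller $K$-theoretic positivity relies on finer Kawamata--Viehweg vanishing applied to Bott--Samuelson resolutions, and it is not immediately clear that the resolutions and vanishing can be performed $B^-(w)$-equivariantly rather than fully $B^-$-equivariantly. Tracing through the AGM argument and upgrading it to use only the normal-subgroup chain $B^-(ws_i)\subset B^-(w)$ supplied by Lemma~\ref{lem:normal-subgroup} is the key technical task. A secondary concern is that the intersection in the second step must be Tor-independent as sheaves rather than merely generically transverse; this should follow from Richardson-variety theory but requires checking in the presence of the twist by $\tau$.
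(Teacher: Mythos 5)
The statement you are addressing is stated in the paper only as a conjecture; the authors offer no proof of it, so there is no argument of theirs to compare yours against. What you have written is a research program rather than a proof, and you have correctly located the point where it is incomplete, but you have not closed that gap, so the conjecture remains unproven by your proposal.

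To be concrete about where things stand: your Steps 2 and 3 are essentially fine. After translating by $u_0^{-1}$ as in \Cref{lem:transversality}, the intersection $\tau\overline{B^-uB/B}\cap\overline{B^-vB/B}$ becomes $w_0\overline{B^-uB/B}\cap\overline{B^-vB/B}$, a genuine Richardson variety, so Brion's results give reducedness, rational singularities, and Tor-independence, and the product of structure sheaves is the structure sheaf of the intersection. The fatal gap is Step 1. The inductive engine of \Cref{thm:graham-positivity-refined} is \cite[Proposition 19.4.4]{AF}, which is a statement about \emph{cycles}: one degenerates $Y$ in an $\mathbb{A}^1$-family and records only the top-dimensional components of the special fibre with multiplicities. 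In $K$-theory the class of the special fibre of a flat degeneration does equal $[\mathcal{O}_Y]_T$, but that special fibre may be non-reduced and may carry embedded and lower-dimensional components; re-expressing its class in terms of structure sheaves of reduced $B^-(ws_i)$-invariant subvarieties reintroduces alternating signs, and there is no a priori reason the coefficients land in $\mathbb{N}[\beta][\ominus(w\alpha_i)]$. This is precisely why the Anderson--Griffeth--Miller proof does not proceed one root at a time: it degenerates the diagonal in $X\times X$ globally and uses Kawamata--Viehweg-type vanishing for the full $B^-$-action, and it is genuinely unclear that this can be localized to the chain $B^-(ws_i)\subset B^-(w)$ furnished by \Cref{lem:normal-subgroup}. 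You flag this as ``the key technical task'' but supply no argument for it; until that step is carried out (or replaced), the conjecture is not proved. A secondary point to pin down is the formal-group bookkeeping in Step 3: $\ominus$ applied to the character $y_j-t_i\in I(\tau)$ must be shown to produce $t_i\ominus y_j=\frac{t_i-y_j}{1+\beta y_j}$ rather than $\frac{t_i-y_j}{1+\beta(y_j-t_i)}$; this does work out via $\ominus(y_j\ominus t_i)=t_i\ominus y_j$, but only after fixing the identification of characters with $K$-theoretic first Chern classes consistently with the convention of \cite{LLS-Ktheory}.
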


\section*{Acknowledgements}
We thank Hai Zhu for enlightening conversations. Y.G. is partially supported by NSFC Grant No. 12471309.

\bibliographystyle{plain}
\bibliography{ref}
\end{document}